\theoremstyle{definition}
\newtheorem{lem}{Lemma}[section]
\newtheorem{cor}[lem]{Corollary}
\newtheorem{prop}[lem]{Proposition}
\newtheorem{thm}[lem]{Theorem}
\newtheorem{alg}[lem]{Algorithm}
\newtheorem{defn}[lem]{Definition}
\newtheorem{ex}[lem]{Example}
\renewcommand{\phi}{\varphi}
\newcommand{\Aut}{\text{Aut}}
\newcommand{\setx}{\langle x \rangle}
\newcommand{\sety}{\langle y \rangle}
\renewcommand{\geq}{\geqslant}
\renewcommand{\leq}{\leqslant}
\newenvironment{breakablealgorithm}
  {
   \begin{center}
     \refstepcounter{algorithm}
     \hrule height.8pt depth0pt \kern2pt
     \renewcommand{\caption}[2][\relax]{
       {\raggedright\textbf{\fname@algorithm~\thealgorithm} ##2\par}%
       \ifx\relax##1\relax 
         \addcontentsline{loa}{algorithm}{\protect\numberline{\thealgorithm}##2}%
       \else 
         \addcontentsline{loa}{algorithm}{\protect\numberline{\thealgorithm}##1}%
       \fi
       \kern2pt\hrule\kern2pt
     }
  }{
     \kern2pt\hrule\relax
   \end{center}
  }
\numberwithin{equation}{section}
\begin{document}
\address{PRIMES-USA}
\email[A.~Agarwal]{arjunagarwal010@gmail.com}
\email[R.~Chen]{rachelrxchen@gmail.com}
\email[R.~Garg]{rohangarg2008@gmail.com}
\email[J.~Kettinger]{jkettin@g.clemson.edu}
\author{Arjun Agarwal}
\author{Rachel Chen}
\author{Rohan Garg}
\author{Jared Kettinger}

\title{Automorphically Equivalent Elements  Of Finite Abelian Groups}

\begin{abstract}
Given a finite abelian group $G$ and elements $x, y \in G$, we prove that there exists $\phi \in \Aut(G)$ such that $\phi(x) = y$ if and only if $G/\setx \cong G/\sety$. This result leads to our development of the two fastest known algorithms to determine if two elements of a finite abelian group are automorphic images of one another. The second algorithm also computes $G/\langle x \rangle$ in a near-linear time algorithm for groups, most feasible when the group has exponent at most $10^{20}$. We conclude with an algorithm that computes the automorphic orbits of finite abelian groups.
\end{abstract}
\maketitle

\sloppy
\section{Introduction}

In this paper, we study a necessary and sufficient condition for two elements of a finite abelian group $G$ to be in the same automorphic orbit and use this to develop multiple algorithms. We prove that elements $x$ and $y$ in finite abelian group $G$ are in the same automorphic orbit if and only if $G/\setx \cong G/\sety$. We further devise algorithms to check this condition, using them to classify and count the elements in each orbit. The ease of computation of $G/\setx$ leads us to develop the two fastest known algorithms for computing whether two elements of a finite abelian group are automorphic images of each other and the fastest known algorithm for computing the automorphism orbits of finite abelian groups. 

The topic of automorphisms in finite abelian groups has been extensively studied, but previous works were focused on describing and counting the automorphisms (see \cite{hillar}) or counting the number of orbits (see \cite{SS, dutta}). Our work builds on these works, examining other areas of automorphisms in finite abelian groups as well. Throughout this paper, we place an emphasis on the computational speed of our algorithms, an area that has not been extensively investigated in the study of automorphisms in finite abelian groups. 

\section{Condition for Automorphic Equivalence}
For the rest of the paper, let $G$ denote a finite abelian group. 
\begin{thm}\label{mainres}
    Given $x$, $y \in G$, there exists $\phi \in \Aut(G)$ such that $\phi(x) = y$ if and only if $G/\setx \cong G/\sety$. 
\end{thm}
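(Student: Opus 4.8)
The plan is to prove the two implications separately; the forward (``only if'') direction is immediate and the reverse direction carries all of the content.

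For the forward direction: if $\phi\in\Aut(G)$ and $\phi(x)=y$, then $\phi$ maps $\setx$ onto $\sety$, so it induces a well-defined homomorphism $G/\setx\to G/\sety$, which is an isomorphism because $\phi$ is.

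For the reverse direction I would first reduce to the case that $G$ is a $p$-group. Writing $G=\bigoplus_p G_p$ for the primary decomposition, each $G_p$ is characteristic, $\Aut(G)=\prod_p\Aut(G_p)$, and for $x=\sum_p x_p$ we have $\setx=\bigoplus_p\langle x_p\rangle$ and $G/\setx\cong\bigoplus_p G_p/\langle x_p\rangle$, with $G_p/\langle x_p\rangle$ the $p$-primary part of $G/\setx$. Hence $G/\setx\cong G/\sety$ iff $G_p/\langle x_p\rangle\cong G_p/\langle y_p\rangle$ for every $p$, and an automorphism of $G$ carrying $x$ to $y$ can be assembled from automorphisms of the $G_p$ carrying $x_p$ to $y_p$. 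So from here on assume $G$ is a finite abelian $p$-group.

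The goal is then to show that, for fixed $G$, the isomorphism type of $G/\setx$ is a \emph{complete} invariant of the $\Aut(G)$-orbit of $x$. That the orbit determines the quotient type is the forward direction already proved, so the quotient type is at least an invariant; the work is the converse. For this I would use the height (Ulm) sequence $\bigl(h_G(x),h_G(px),h_G(p^2x),\dots\bigr)$, where $h_G(z)=\max\{j:z\in p^jG\}$, which is a standard complete invariant of the orbit of $x$ (one direction is clear since automorphisms preserve $h_G$; for the other one carries $x$ into a normal form with respect to a cyclic decomposition $G=\bigoplus_i\langle g_i\rangle$ by an adapted-basis argument). With $x$ in such a normal form one can compute the isomorphism type of $G/\setx$ explicitly in terms of the cyclic-decomposition data of $G$ and the height sequence of $x$, summand by summand. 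It then remains to check that this assignment is injective in its second argument: distinct height sequences realizable in $G$ yield non-isomorphic quotients. Given that, $G/\setx\cong G/\sety$ forces $x$ and $y$ to have equal height sequences, hence to lie in the same orbit.

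I expect the last step to be the main obstacle: producing a normal form for $x$ and a description of $G/\setx$ clean enough that one can see no two distinct (realizable) height sequences collapse to the same partition for the quotient. This amounts to a combinatorial statement about how deleting the cyclic subgroup $\setx$ reshapes the partition recording the cyclic decomposition of $G$. An alternative to the height-sequence route would be a direct induction on $|G|$, splitting off a cyclic direct summand of $G$ and matching it with the corresponding piece of $G/\setx$, but this seems to require the same combinatorial control, so I would expect to do the bookkeeping either way.
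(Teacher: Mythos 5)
Your forward direction and the reduction to $p$-groups are both correct and essentially match the paper's. From there you diverge: the paper invokes a theorem of Buzási asserting that $G/A\cong G/B$ for cyclic subgroups $A,B$ of a finite abelian $p$-group forces some $\phi_1\in\Aut(G)$ carrying $A$ onto $B$, and then repairs the generator (from $\phi_1(x)=ky$ with $p\nmid k$ it composes with the componentwise multiplication-by-$k^{-1}$ automorphism). You instead route everything through the height (Ulm) sequence as a complete orbit invariant. That is a legitimate strategy, but as written it has a genuine gap: the step ``distinct realizable height sequences yield non-isomorphic quotients'' is exactly where the content of the theorem lives, and you leave it as an ``expected obstacle'' rather than proving it. Until that injectivity is established you have only re-encoded the orbit of $x$ by a different complete invariant; the implication $G/\setx\cong G/\sety\Rightarrow x\sim y$ remains open. (The completeness of the height sequence itself is also only asserted; it is classical and citable, e.g.\ to Kaplansky, but should be pinned down.)

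The gap is fillable, and more cleanly than by the normal-form bookkeeping you anticipate. For each $j\geq 0$ one has $p^j(G/\setx)=(p^jG+\setx)/\setx\cong p^jG/(p^jG\cap\setx)$, hence $|p^j(G/\setx)|=|p^jG|/|\setx\cap p^jG|$. Since the isomorphism type of a finite abelian $p$-group $H$ is determined by the orders $|p^jH|$ for all $j$ (their successive differences count the parts of the partition), $G/\setx\cong G/\sety$ forces $|\setx\cap p^jG|=|\sety\cap p^jG|$ for all $j$. Writing $|x|=p^s$, the subgroup $\setx\cap p^jG$ equals $\langle p^{m_j}x\rangle$ with $m_j=\min\{m:h_G(p^mx)\geq j\}$, so these orders recover every $m_j$, and inverting the monotone correspondence $m\mapsto h_G(p^mx)$ recovers the full height sequence of $x$. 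That computation, together with a precise citation or proof of the completeness of the height sequence, closes your argument; without it, the proposal establishes only the easy direction.
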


\begin{lem}\label{lemmaprod}
(Lemma 2.1 in \cite{hillar})
If $H$ and $K$ are finite groups with relatively prime orders, 
$$
\Aut(H) \oplus \Aut(K) \cong \Aut(H \oplus K). 
$$
\end{lem}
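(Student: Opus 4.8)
The plan is to exhibit an explicit isomorphism rather than argue by counting. First I would define the natural map
$$\Phi \colon \Aut(H) \oplus \Aut(K) \to \Aut(H \oplus K), \qquad \Phi(\alpha, \beta) = \alpha \oplus \beta,$$
where $(\alpha \oplus \beta)(h,k) = (\alpha(h), \beta(k))$. One checks directly that each $\alpha \oplus \beta$ is a bijective homomorphism of $H \oplus K$, that $\Phi$ respects the group operation, and that $\Phi(\alpha, \beta) = \Phi(\alpha', \beta')$ forces $\alpha = \alpha'$ and $\beta = \beta'$ by evaluating on elements of the form $(h, e)$ and $(e, k)$; hence $\Phi$ is an injective homomorphism. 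All of this is routine and uses no hypothesis on the orders.

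The substance of the argument is surjectivity, and this is where the coprimality of $|H|$ and $|K|$ enters. The key claim is that $H \times \{e\}$ and $\{e\} \times K$ are characteristic subgroups of $H \oplus K$. I would prove this by describing $H \times \{e\}$ intrinsically: the order of $(h,k)$ is $\lcm(\mathrm{ord}(h), \mathrm{ord}(k))$, and since $\mathrm{ord}(h) \mid |H|$ while $\mathrm{ord}(k) \mid |K|$ with $\gcd(|H|, |K|) = 1$, the element $(h,k)$ satisfies $(h,k)^{|H|} = e$ exactly when $\mathrm{ord}(k) \mid |H|$, which combined with $\mathrm{ord}(k) \mid |K|$ gives $k = e$. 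Thus $H \times \{e\} = \{g \in H \oplus K : g^{|H|} = e\}$, a set preserved by every automorphism because automorphisms preserve element orders; the symmetric argument handles $\{e\} \times K$.

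Granting the claim, surjectivity follows quickly. Any $\psi \in \Aut(H \oplus K)$ restricts to automorphisms of the two characteristic factors, yielding $\alpha \in \Aut(H)$ and $\beta \in \Aut(K)$ under the identifications $H \cong H \times \{e\}$ and $K \cong \{e\} \times K$. Since every element factors as $(h,k) = (h,e)(e,k)$ and $\psi$ is a homomorphism, $\psi(h,k) = \psi(h,e)\,\psi(e,k) = (\alpha(h), \beta(k))$, so $\psi = \Phi(\alpha, \beta)$ and $\Phi$ is onto. I expect the main obstacle to be precisely the characteristic-subgroup claim: the order-based description of the factors is exactly what fails when the orders share a common prime, so the proof must place the coprimality hypothesis at the center of that step rather than in the formal verifications around it.
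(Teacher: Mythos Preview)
Your proof is correct and is essentially the standard argument: define the obvious map $\Phi(\alpha,\beta)=\alpha\oplus\beta$, verify it is an injective homomorphism, and obtain surjectivity by showing that $H\times\{e\}$ and $\{e\}\times K$ are characteristic in $H\oplus K$ via the order description $H\times\{e\}=\{g:g^{|H|}=e\}$, which is exactly where $\gcd(|H|,|K|)=1$ is needed.

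There is nothing to compare it to in this paper, however: the authors do not give a proof of this lemma at all. They simply quote it as Lemma~2.1 of \cite{hillar} and use it as a black box. So your write-up supplies strictly more than the paper does here, and it matches the usual proof one finds in the cited source.
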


\begin{prop}\label{prop:pgroup}
    In order to prove that $G/\setx \cong G/\sety$ implies there exists $\phi \in \Aut(G)$ such that $\phi(x) = y$, it is sufficient to consider $p$-groups. 
\end{prop}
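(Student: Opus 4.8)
The plan is to use the primary (Sylow) decomposition of a finite abelian group together with Lemma~\ref{lemmaprod}. Write $G = \bigoplus_{p} G_p$, where $p$ ranges over the primes dividing $|G|$ and $G_p$ is the Sylow $p$-subgroup, equivalently the $p$-primary component, of $G$. For $x \in G$ let $x = \sum_p x_p$ be the corresponding decomposition with $x_p \in G_p$, and similarly $y = \sum_p y_p$. First I would observe that because the elements $x_p$ have pairwise coprime orders, $\setx = \bigoplus_p \langle x_p \rangle$, and hence
$$
G/\setx \;\cong\; \bigoplus_p \bigl(G_p/\langle x_p\rangle\bigr),
$$
with the $p$-th summand being exactly the $p$-primary component of $G/\setx$; the analogous statements hold for $y$.

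Next, assuming $G/\setx \cong G/\sety$, I would invoke uniqueness of the primary decomposition: any isomorphism of finite abelian groups carries the $p$-primary component of one onto the $p$-primary component of the other. Combined with the displayed decomposition, this yields $G_p/\langle x_p \rangle \cong G_p/\langle y_p \rangle$ for every prime $p$. If the implication of Theorem~\ref{mainres} is known for $p$-groups, then for each $p$ there is $\phi_p \in \Aut(G_p)$ with $\phi_p(x_p) = y_p$.

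Finally, I would assemble these local automorphisms. Since the Sylow subgroups $G_p$ have pairwise coprime orders, applying Lemma~\ref{lemmaprod} inductively over the finitely many primes dividing $|G|$ shows that $\phi := \bigoplus_p \phi_p$ lies in $\Aut\bigl(\bigoplus_p G_p\bigr) = \Aut(G)$, and by construction $\phi(x) = \sum_p \phi_p(x_p) = \sum_p y_p = y$. This gives the desired reduction to $p$-groups.

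I do not expect a genuine obstacle here; the only points needing a little care are the two compatibility facts between passing to a quotient and the primary decomposition — namely that $\setx$ splits as $\bigoplus_p \langle x_p\rangle$, and that an abstract isomorphism $G/\setx \cong G/\sety$ forces the componentwise isomorphisms $G_p/\langle x_p\rangle \cong G_p/\langle y_p\rangle$ rather than merely an isomorphism of the ambient groups. Both are standard consequences of the structure theory of finite abelian groups (coprimality of the orders of the $x_p$, and uniqueness of primary decomposition), so once Lemma~\ref{lemmaprod} is in hand the argument is essentially bookkeeping.
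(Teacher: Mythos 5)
Your proposal is correct and follows essentially the same route as the paper: decompose $G$ into its Sylow $p$-subgroups, observe that $G/\setx$ splits componentwise so that $G/\setx \cong G/\sety$ forces $G_p/\langle x_p\rangle \cong G_p/\langle y_p\rangle$ for each $p$, and then assemble the local automorphisms via Lemma~\ref{lemmaprod}. If anything, you are slightly more explicit than the paper about why the abstract isomorphism of quotients yields the componentwise isomorphisms (uniqueness of primary decomposition) and about how the $\phi_p$ are glued into an element of $\Aut(G)$.
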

\begin{proof}
We know that any finite abelian group is isomorphic to the direct sum of its
Sylow $p$-subgroups. Therefore, if $G$ is a finite abelian group such that $|G| = {p_1}^{e_1}{p_2}^{e^2}\dots{p_n}^{e_n}$ where each $p_i$ is a distinct prime, then 
$$
G \cong \bigoplus_{i=1}^n H_{p_i},
$$
where $H_{p_j}$ is the unique Sylow $p_j$-subgroup.
We will represent elements of $G$ as $n$-tuple with the $i^\text{th}$ member of the tuple being an element of $H_{p_i}$. 
For $x,y \in G$,
let $x = (x_{p_1},x_{p_2},\dots,x_{p_n})$ and
$y = (y_{p_1},y_{p_2},\dots,y_{p_n})$, where $x_{p_i}, y_{p_i} \in H_{p_i}$ for each $1 \leq i \leq n$.
We will use $(x_{p_i})$ to denote the element of the $G$ (written in its Sylow decomposition) with $x_{p_i}$ as the $i^{th}$ component and identity $0$ everywhere else. 

For $1 \leq i \leq n$, the order of $(x_{p_i})$ is a power of $p_i$ and $H_{p_i}$ is the only component in the decomposition of $G$ that has order divisible by $p_i$, so
$G/\langle x \rangle \cong \bigoplus_{i=1}^n H_{p_i} /\langle (x_{p_i}) \rangle $ and $G/\langle y \rangle \cong \bigoplus_{i=1}^n H_{p_i} /\langle (y_{p_i}) \rangle $. We conclude that
$G/\langle x \rangle \cong G/\langle y \rangle  $ 
is equivalent to \[H_{p_i} /\langle (x_{p_i})\rangle \cong H_{p_i} / \langle (y_{p_i}) \rangle \text{ for all } 1 \leq i \leq n.\]

Combining this with Lemma~\ref{lemmaprod}, because the orders of $H_{p_i}/\langle (x_{p_i})\rangle$ are pairwise coprime for $1 \leq i \leq n$, we see that to prove the sufficient condition for Theorem \ref{mainres}, it is sufficient to prove the condition for $p$-groups. 
\end{proof}

\begin{proof}[Proof of Theorem \ref{mainres}]
    First, we prove that $G/\setx \cong G/\sety$ is a necessary condition for there to exist $\phi \in \Aut(G)$ such that $\phi(x) = y$. 

    Define $X = \langle x \rangle$, $Y = \langle y \rangle$. If $\phi(x) = y$ where $\phi \in \Aut(G)$, we claim that $\psi : a+X \mapsto \phi(a)+Y$ is an isomorphism. First, we show that $\psi$ is well-defined. Assume $a + X = b + X$, then $a-b \in X$. Hence, $\phi(a-b) = \phi(a) - \phi(b) \in Y$ implies that $\phi(a) + Y = \phi(b) + Y$ which proves $\psi(a +X) = \psi(b+X)$. Thus, $\psi$ is well-defined. 
    The surjectivity of $\psi$ follows from the fact that $\phi$ is surjective. Since $x$ and $y$ have the same order, the number of cosets of $X$ in $G$ is equal to the number of cosets of $Y$, so injectivity follows from surjectivity and the fact that $|G|$ is finite. Finally, $\psi$ is a homomorphism as $\psi((a+X)+(b+X)) = \psi((a+b)+X) = \phi(a+b)+Y = (\phi(a)+\phi(b))+Y = (\phi(a)+Y) +  (\phi(b)+Y) = \psi(a+X)+\psi(b+X)$. This implies that $G/\setx \cong G/\sety$. 

Now, we prove that $G/\setx \cong G/\sety$ is sufficient. 

Let $G$ be a finite abelian $p$-group such that
$$
G \cong \bigoplus_{i=1}^nC_{p^{e_i}}
$$
where $p$ is prime and $1 < e_1 \leq e_2\leq  \dots \leq e_n$. 
From~\cite{buzasi} we know that if $G/A \cong G/B$ where $A$ and $B$ are cyclic groups, there exists a $\phi_1 \in \Aut(G)$ that maps the elements of $A$ to the elements of $B$. 
Taking $A = \langle x \rangle$ and $B = \sety$, we have
$$
\phi_1(x) = ky \text{ for some }k \in \mathbb{N}.
$$ 
Therefore, we have that $|ky| = |x| = |y|$. 

We will prove that when $x$ and $y$ are not the identity elements of $G$, we have $\gcd(k, p) = 1$. 
Assume for the sake of contradiction that $p \mid k$. 
Since $G$ is a $p$-group, we know $p \mid |y|$, so
\[
0 = \frac{k}{p} ( |y| y ) =  \frac{|y|}{p}  ( ky)
\]
which implies $|ky| \mid \frac{|y|}{p}$ so $|ky| \neq |y|$, which is a contradiction. Therefore, $\gcd(k, p) = 1$. 

From the above, we know that $k^{-1}$ exists modulo $p^m$ for all $m \in \mathbb{N}$. 
In other words, for each $p^{e_i}$, there exists some $a_i \in \mathbb{N}$ such that $a_i k \equiv 1 \pmod{p^{e_i}}.$

Consider $x = (x_1,x_2,\dots,x_n)$.
We can show that the map defined by $\phi_2 : G \rightarrow G$ with $\phi_2(x) = (a_1x_1, \dots, a_nx_n)$ is an automorphism. Since $\phi_2$ is a linear map, it is also a homomorphism. 
If $\phi_2(g) = \phi_2(h)$, then $k\phi_2(g) = k\phi_2(h)$, implying
$\phi_2(kg) = \phi_2(kh)$, which implies $g = h$. Therefore, $\phi_2$ is injective. Surjectivity follows from injectivity since $G$ is finite. 

From the above, we get that $\phi_2$ is an automorphism so $\phi_2 \circ \phi_1 \in \text{Aut}(G)$.
Then, 
$$(\phi_2 \circ \phi_1)(x) = \phi_2(\phi_1(x)) = \phi_2(ky) = y. $$
Therefore, there exists an automorphism mapping $x$ to $y$, as desired. 
\end{proof}

Theorem \ref{mainres} establishes the necessary and sufficient conditions for two elements in a finite abelian group to be automorphic images of one another. In addition to algorithmic applications, which we discuss in Section \ref{sec:computing}, this theorem can be applied alongside other results in group theory to give us corollaries that would otherwise be difficult to see. 
\begin{thm}
If $x$ and $y$ are both of maximal order in $G$, they are automorphic images of one another. 
\end{thm}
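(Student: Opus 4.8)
The plan is to use Theorem~\ref{mainres} to reduce the claim to showing $G/\setx \cong G/\sety$, and then to recognize both quotients as complements of one and the same cyclic group. The engine is the classical fact that \emph{an element of maximal order in a finite abelian group generates a direct summand}. Concretely: if $x$ has order $\exp(G)$, then there is a subgroup $K \leq G$ with $G = \setx \oplus K$, and since $\setx$ is then a cyclic group of order $\exp(G)$, the complement $K$ is forced by the invariant-factor decomposition to be isomorphic to $G/\setx$.

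To establish the direct-summand fact I would first reduce to $p$-groups exactly as in the proof of Proposition~\ref{prop:pgroup}: writing $G \cong \bigoplus_{i=1}^n H_{p_i}$ and $x = (x_{p_1},\dots,x_{p_n})$, the order of $x$ is $\prod_i |x_{p_i}|$ while $\exp(G) = \prod_i \exp(H_{p_i})$, and these are products of pairwise coprime prime powers, so $x$ has maximal order in $G$ iff each $x_{p_i}$ has maximal order in $H_{p_i}$; a direct-sum decomposition for each $H_{p_i}$ then reassembles (using that $\setx = \bigoplus_i \langle x_{p_i}\rangle$, a direct sum of groups of coprime orders) to one for $G$. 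For a $p$-group $G \cong \bigoplus_{i=1}^n C_{p^{e_i}}$ with $e_1 \leq \cdots \leq e_n$ and $|x| = p^{e_n} = \exp(G)$, I would choose $H \leq G$ maximal subject to $H \cap \setx = 0$ and argue $H + \setx = G$: otherwise pick $g \notin H+\setx$ with $pg \in H + \setx$, say $pg = h + cx$; multiplying by $p^{e_n-1}$ and using $p^{e_n}g = 0$ forces $p^{e_n-1}cx = 0$, hence $p \mid c$, so replacing $g$ by $g - (c/p)x$ we may assume $pg \in H$, and then $H + \langle g\rangle$ still meets $\setx$ trivially, contradicting maximality. Thus $G = \setx \oplus H$.

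With this in hand the theorem is immediate: if $x$ and $y$ both have maximal order then $G \cong \setx \oplus K$ and $G \cong \sety \oplus K'$ with $\setx \cong C_{\exp(G)} \cong \sety$, so cancellation for finite abelian groups (Krull--Schmidt, or uniqueness of the invariant-factor decomposition) gives $K \cong K'$, and therefore
\[
G/\setx \cong K \cong K' \cong G/\sety .
\]
By Theorem~\ref{mainres} there exists $\phi \in \Aut(G)$ with $\phi(x) = y$. The one place demanding care — and the main obstacle — is the lifting step in the direct-summand lemma: one must justify that $g$ can be chosen with $pg \in H+\setx$ but $g\notin H+\setx$, and that after subtracting a multiple of $x$ the enlarged subgroup still intersects $\setx$ trivially. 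The hypothesis $|x| = \exp(G)$ is exactly what makes the coefficient $c$ divisible by $p$, so the correction is possible; alternatively this lemma may simply be cited from a standard reference on the structure of finite abelian groups.
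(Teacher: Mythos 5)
Your proposal is correct, but it takes a genuinely different route from the paper. The paper never touches Theorem~\ref{mainres} here: it constructs the automorphism carrying $x$ to $y$ explicitly, by permuting invariant factors so that a generator sits in the last coordinate, replacing that coordinate by $1$, and then defining a map $\psi_x$ on a spanning set containing the modified element; composing $\phi_y^{-1}\circ\psi_y^{-1}\circ\psi_x\circ\phi_x$ does the job. You instead prove the quotient isomorphism first --- via the classical fact that an element of maximal order generates a direct summand, so $G \cong \setx\oplus K \cong \sety\oplus K'$ with $\setx\cong\sety\cong C_{\exp(G)}$, whence $K\cong K'$ by cancellation and $G/\setx\cong G/\sety$ --- and only then invoke the sufficiency direction of Theorem~\ref{mainres} to produce the automorphism. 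This is not circular, since Theorem~\ref{mainres} is established independently, and your sketch of the direct-summand lemma (maximal $H$ with $H\cap\setx=0$, lifting an order-$p$ element of $G/(H+\setx)$, using $|x|=\exp(G)$ to force $p\mid c$ and correct $g$ by $(c/p)x$) is the standard proof and checks out; the remaining verification that $(H+\langle g\rangle)\cap\setx=0$ splits on whether $p$ divides the coefficient of $g$ and is routine. The trade-off: your argument leans on heavier classical machinery (the direct-summand lemma and Krull--Schmidt/invariant-factor cancellation) but delivers the paper's subsequent corollary ($G/\setx\cong G/\sety$ for maximal-order elements) as an intermediate step rather than a consequence, whereas the paper's construction is elementary and explicit, exhibiting the automorphism rather than inferring its existence.
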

\begin{proof}
Let $G \cong C_{m_1}\oplus C_{m_2} \oplus \dots \oplus C_{m_k}$ be a finite abelian group written in its invariant factor decomposition. Let $C_{m_r}$, $C_{m_{r+1}}$, $\dots$, $C_{m_k}$ be the invariant factors of maximal order. For an element $x \in G$ to be of maximal order, there exists $i$ where $r \leq i \leq k$ such that the $i$th component of $x$ is a generator of $C_{m_i}$. Define this generator as $x_i$. We construct an automorphism $\phi_{x}$ composing the automorphism switching the $i^\text{th}$ and the $k^\text{th}$ components (which is an automorphism because $C_{m_i}$ and $C_{m_k}$ have the same order, the maximal order) with the automorphism mapping where we map $x$ to the element where all components are kept the same except the $k$-th which becomes $1$. Define $x' = \phi_x(x)$; $x'$ must be of the form $x' = (x'_1, x'_2, \dots, x'_{n-1}, 1)$. We construct another map $\psi_x : G \rightarrow G$ as follows: \begin{align*}
\psi_x : (1, 0, \dots, 0, 0) &\mapsto (1, 0, \dots, 0, 0)\\
\psi_x : (0, 1, \dots, 0, 0) &\mapsto (0, 1, \dots, 0, 0)\\
&\;\;\vdots\\
\psi_x : (0, 0, \dots, 1, 0) &\mapsto (0, 0, \dots, 1, 0)\\
\psi_x : (x'_1, x'_2, \dots, x'_{n-1}, 1) &\mapsto (0, 0, \dots, 0, 1).
\end{align*}
We claim $\psi_x$ is an automorphism. First of all, $\psi_x$ is a homomorphism by construction because the preimages of all the maps we defined is a spanning set of $G$. Furthermore, $\psi_x$ is surjective because the images of all the defined maps form a minimal spanning set of $G$. Since $G$ is finite, injectivity is implied, so $\psi_x$ must be an automorphism. 

Similarly, $\phi_y$ can be defined mapping $y$ to $y'$, and $\psi_y$ can be defined mapping $y'$ to $(0, 0, \dots, 0, 1)$. An automorphism mapping $x$ to $y$ is therefore $\phi_y^{-1}\circ\psi_y^{-1}\circ\psi_x\circ\phi_x$. 
\end{proof}
The above theorem combined with Theorem \ref{mainres} implies a result that is not immediately clear. 
\begin{cor}
    Given two elements $x$, $y \in G$ of maximal order, $G/\setx \cong G/\sety$. 
\end{cor}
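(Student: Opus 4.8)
The plan is to deduce this directly from the two results immediately preceding it: the theorem asserting that any two elements of maximal order are automorphic images of one another, and the necessity direction of Theorem~\ref{mainres}.

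First I would invoke the preceding theorem to obtain an automorphism $\phi \in \Aut(G)$ with $\phi(x) = y$, which applies precisely because $x$ and $y$ are both assumed to be of maximal order in $G$. Then I would apply the forward (necessary) direction of Theorem~\ref{mainres}, which says that the existence of such a $\phi$ forces $G/\setx \cong G/\sety$. Concatenating these two implications yields the claim.

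Since both ingredients are already established in the excerpt, there is essentially no obstacle here; the only thing to be careful about is citing the correct direction of Theorem~\ref{mainres} (the ``only if'' part, whose proof via the induced map $\psi : a + X \mapsto \phi(a) + Y$ does not itself depend on the corollary, so there is no circularity). Thus the argument is a two-line deduction:
\begin{proof}
By the preceding theorem, since $x$ and $y$ both have maximal order in $G$, there exists $\phi \in \Aut(G)$ with $\phi(x) = y$. By the necessity direction of Theorem~\ref{mainres}, this implies $G/\setx \cong G/\sety$.
\end{proof}
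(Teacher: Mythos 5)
Your proposal is correct and matches the paper's intent exactly: the corollary is stated as an immediate consequence of the preceding theorem together with the necessity direction of Theorem~\ref{mainres}, which is precisely the two-step deduction you give. Your note that the ``only if'' direction does not depend on the corollary (so there is no circularity) is a sensible precaution.
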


\section{Computing Automorphic Equivalence of Two Elements}\label{sec:computing}
\subsection{First Algorithm: Directly Applying Smith Normal Form}\label{sec:firstalgo}

It is well known that given finite abelian group $G$ and $x \in G$, $G /\setx$ can be computed by Smith Normal Form  \cite{artin}. Namely, if we let $G \cong C_{m_1} \oplus C_{m_2} \oplus \dots \oplus C_{m_k}$ and $x = (x_1, x_2, \dots, x_k)$. Then, $G/\langle x \rangle$ can be computed by writing  the matrix 
\[ \begin{bmatrix}
x_{1} & x_{2} & \dotsc  & x_{k}\\
m_{1} &  &  & \\
 & m_{2} &  & \\
 &  & \ddots  & \\
 &  &  & m_{k}
\end{bmatrix}\] in Smith Normal Form. 

The time complexity to compute Smith Normal Form for integer matrices in $\mathbb{Z}^{n \times m}$ is $O(n^{\theta-1}mM(n\log(||A||)))$ where $||A|| = \max\{A(i,j) \mid 1 \leq i\leq n, 1 \leq j \leq m\}$, $M(t)$ bounds the cost of multiplying 
two $t$-bit integers, and $\theta$ is the exponent of multiplication of two $n \times n$ matrices ~\cite{Storjohann}. 

The most commonly used fast matrix multiplication algorithm is Strassen's algorithm with a time complexity of $O(n^{2.8074})$ (\cite{strassen}). For a group of rank $n$, the associated matrix to describe in Smith Normal Form is an $(n+1) \times n$ matrix. Assuming multiplication is a constant time operation, this gives a time complexity of verifying $G/\langle x \rangle \cong G/\langle y \rangle$ as $O(n^{2.8074})$ where $n$ is the rank of the group. Therefore, the problem of determining whether two elements are automorphic images of one another can be solved with a time complexity of  $O(n^{2.8074})$.

\subsection{Second Algorithm: Splitting into $p$-groups}\label{sec:secondalgo}

We present another algorithm to compute $G/\setx$. 

To do this, we describe a simpler Smith Normal Form algorithm to compute $G/\langle x \rangle$ for $p$-groups which runs the Smith Normal Form algorithm in \cite{artin} in terms of $\nu_p$. 

\begin{alg}\label{psmith}

Suppose that $G \cong C_{p^{e_1}} \oplus \dots \oplus C_{p^{e_k}}$ and $x = (a_1p^{f_1}, a_2p^{f_2}, \dots, a_kp^{f_k})$ where $p \nmid a_i$ for $1 \leq i 
\leq k$ if $a_i \neq 0$. If any of the $a_i$ are equal to $0$, we can simply remove the zero and remove the component of $G$ that $a_i$ is in. Without loss of generality, let the first component of $x$ be zero, so $x = (0, a_2p^{f_2}, \dots, a_kp^{f_k})$. Consider $x' = (a_2p^{f_2}, \dots, a_kp^{f_k})$ and $G' \cong  C_{p^{e_2}} \oplus \dots \oplus C_{p^{e_k}}$, and note that $G/\langle x \rangle$ is isomorphic to $C_{p^{e_1}} \oplus G'/\langle x' \rangle$. Therefore, assume $a_i \neq 0$ for all $i$.

Without loss of generality, let $f_1 \leq f_2 \leq \dots \leq f_k$ (so the invariant factors of $G$ do not need to be from least to greatest). We only need to consider $x = (p^{f_1}, p^{f_2}, \dots, p^{f_k})$ since it is an automorphic image of $(a_1p^{f_1}, a_2p^{f_2}, \dots, a_kp^{f_k})$. 

First, we write down the following list:

\[\displaystyle \begin{matrix}
f_{1} & f_{2} & \dotsc  & f_{k}\\
e_{1} & e_{2} & \dotsc  & e_{k}. 
\end{matrix}\]

Let $a_{mn}$ denote the element of that list in the $m$th row and the $n$th column. The algorithm is as follows. For each $1 \leq i \leq k-1$, add $\max(0, a_{2i} - a_{1i})$ to all $a_{1j}$ where $i + 1 \leq j \leq k$ and then erase the larger value among $a_{2i}$ and $a_{1i}$. Finally, erase the larger value among $a_{2k}$ and $a_{1k}$. There is one value left per column, which are the powers of the invariant factors of the quotient group. 
\end{alg}

\begin{ex}
As an example, we compute $(C_2\oplus C_4\oplus C_8\oplus C_8)/ \langle (2, 1, 2, 4) \rangle$. Here, $f_1 = 0$, $f_2 = f_3 = 1$, $f_4 = 2$, $e_1 = 2$, $e_2 = 1$, $e_3 = e_4 = 3$. Our list is 

\[\displaystyle \begin{matrix}
0 & 1 & 1 & 2\\
2 & 1 & 3 & 3
\end{matrix} \ .\]
Proceeding with the algorithm, 

\[\begin{matrix}
0 & 3 & 3 & 4\\
 & 1 & 3 & 3
\end{matrix}
\rightarrow 
\begin{matrix}
0 &  & 3 & 4\\
 & 1 & 3 & 3
\end{matrix}
\rightarrow
\begin{matrix}
0 &  & 3 & 4\\
 & 1 &  & 3
\end{matrix}
\rightarrow
\begin{matrix}
0 &  & 3 & \\
 & 1 &  & 3
\end{matrix}
\]
where each stage is the operation run on the succeeding column, so our quotient group is $C_{2^0}\oplus C_{2^1}\oplus C_{2^3}\oplus C_{2^3}$ which is what we would expect if we ran the normal Smith Normal Form algorithm.
\end{ex}

Now, we describe the algorithm for computing $G /\setx$. We break the algorithm into two steps: first, we decompose $G$ into the direct sum of its Sylow $p$-subgroups, and second, we compute $G/\langle x \rangle$ when $G$ is a $p$-group.

Let $G$ be a finite abelian group such that 
$$
G \cong C_{a_1} \oplus C_{a_2} \oplus \cdots \oplus C_{a_n} \cong \bigoplus_{i=1}^t H_{p_i},
$$
where the first representation is its invariant factor decomposition and the $H_{p_i}$ are Sylow $p$-subgroups. 

Decomposing $G$ into a product of $p$-groups requires finding the prime factorization of $a_n$. 

Several algorithms exist to do this; we can use the general number field sieve (see \cite{crandall}), which has heuristic time complexity
\[
O(\exp(((64/9)^{1/3} + o(1)) (\log a_n)^{1/3} (\log \log a_n)^{2/3})).
\]

Let $d(k)$ be the number of prime factors of $k$ (not necessarily distinct). Clearly, \[
d(a_1) \leq d(a_2) \leq \cdots \leq d(a_n) \leq \log_2(a_n).\] For each distinct prime factor $p$ of $a_n$, we can find $\nu_p(a_i)$ for each $i$, which takes \[O(d(a_1) + d(a_2) + \dots + d(a_n)) = O(nd(a_n))\] time overall. This gives us our decomposition into $p$-groups as for each prime $p$, the corresponding $p$-group is 
\[
\bigoplus\limits_{i=1}^n C_{p^{\nu_p(a_i)}} .
\]

The time complexity of this step is $O(n d(a_n))$. We can also do this for $x$ and compute $\nu_p(x_i)$ for each $p\mid a_n$ and $1\leq i \leq n$, which we will use later. This is also $O(n d(a_n))$ since we can perform the exact same algorithm.

Due to Lemma~\ref{lemmaprod}, it is sufficient to find the quotient of each $p$-group component of $G$ by its corresponding $p$-group component of $\setx$. Since this can be done independently for each prime, we instead describe an algorithm to compute $H/\setx$ when $H$ is a $p$-group that runs in $O(k \log k)$ time, where $k$ is the rank of $H$. 

Algorithm \ref{psmith} requires the sequence $f$ to be sorted from least to greatest. We can sort $f$ in $O(k\log k)$ and move around the respective elements in $e$. The remaining algorithm involves the following procedure:
\begin{itemize}
    \item Add some integer to the rest of the elements in the array $f$;
    \item Find the value of $f$ at any position in the list; 
    \item Find the value of $e$ at any position in the list.
\end{itemize}
Notice that querying for a value in $e$ is $O(1)$, as the list is always constant. While doing range add queries on arbitrary intervals and querying a point can be done in $O(k \log k)$ using a Segment Tree, for this specific use case, we can do it in $O(k)$ since we specifically do range add queries on suffixes. We create a variable, call it \textbf{sum}, initialized at 0 storing the amount we need to add to the rest of the array. At each index of $f$, we add \textbf{sum} to the value at $f$. After processing that specific index $i$, we can calculate $\max(0,e_i-f_i)$ and add this value to \textbf{sum} since this is the value we are adding to the rest of the array $f$.

To summarize, given a group $G$ with rank $n$, it takes us $O(nd(a_n))$ to decompose it into its $p$-group components. Each $p$-group component has rank at most $n$ and therefore it takes worst case $O(n\log n)$ to find the quotient group for a $p$-group. There are $d(a_n)$ Sylow $p$-subgroups, so the complexity of computing the quotient groups is $O(n\log(n) d(a_n))$. We can replace all the $d(a_n)$ terms with $\log a_n$, since $d(a_n) \leq \log_2(a_n)$. Adding all the terms together, we get a complexity of 
\[
O(\exp(((64/9)^{1/3} + o(1)) (\log a_n)^{1/3} (\log \log a_n)^{2/3}) + n \log n \log a_n),
\]
which is verified in Appendix A. 

The algorithm is most feasible when $a_n \le 10^{20}$ due to the large complexity contributed by prime factorizing the exponent. 

Now, we compare our two algorithms described above. Although the complexity of the algorithm in Section \ref{sec:secondalgo} is a significant improvement from our algorithm in Section \ref{sec:firstalgo}, it is helpful to know the rank at which the 
former outerforms the latter, since the latter 
has a large constant factor. In this second algorithm, the number of operations required assuming the exponent of the group is $10^{20}$ can be approximated with \[
2\cdot 10^7 + 4 n \cdot 67 + 67 n \log n ,
\]
where the first quantity is from the prime factorization of $a_n$, the second is from computing $\nu_p$ and running the algorithm, and the third is from the sorting.

\begin{figure}[h]
\centering
\includegraphics[scale=0.3]{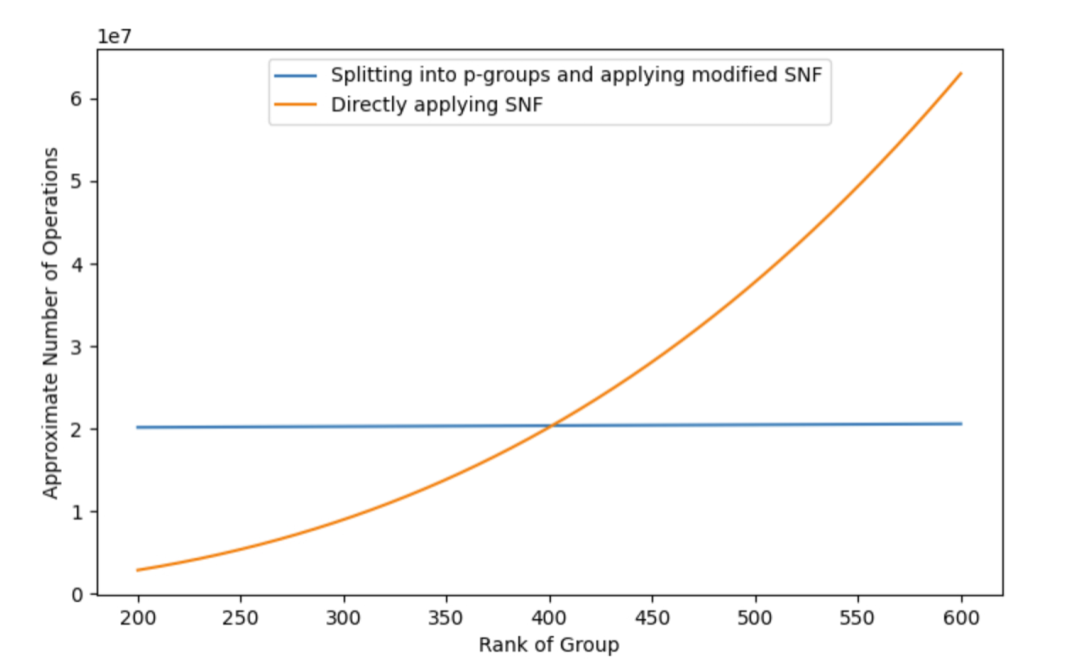}
\caption{Comparing the speed of directly applying SNF versus splitting $G$ into $p$-groups and applying modified SNF on each, assuming the exponent is $10^{20}$.}
\end{figure}

Directly applying Smith Normal Form on $G/\setx$ (the algorithm in Section \ref{sec:firstalgo}) is faster for groups of rank below (approximately) $400$ assuming the exponent of $G$ is $10^{20}$, but for groups of larger ranks, our second algorithm (the algorithm in Section \ref{sec:secondalgo}) is superior. When the exponent of $G$ is small, our algorithm is faster at much smaller ranks. 

\subsection{Comparing Algorithms}
To the best of our knowledge, there is no literature on the time complexity of an algorithm that computes whether two elements of a group are automorphic images of one another. Theorem~\ref{mainres}, combined with efficient algorithms to determine whether two quotient groups are isomorphic, provides a strategy to implement an efficient algorithm for this purpose. It is more reasonable to compute the Smith Normal Form for integer matrices to check if $G/\langle x \rangle \cong G/\langle y \rangle$ than determine an automorphism that maps $x$ to $y$. 

Naively, a brute force algorithm to determine if $x$ and $y$ are automorphic images would be to compute $\Aut(G)$ and then iterate through it, computing $\phi(x)$ for all $\phi \in \Aut(G)$ and checking if it is equal to $y$. In the worst-case scenario, where $x$ and $y$ are not automorphic images of each other, we must iterate through the entirety of $\Aut(G)$.
The best known algorithm to compute $\Aut(G)$ implemented in GAP is described by 
Eick, Leedham-Green and O'Brien. The time complexity of this algorithm is dominated by $n^7$ where $n$ is the rank of $G$ (see Section 11.1 in \cite{eick}). If we run this algorithm and then iterate through all elements of $\Aut(G)$ to check if $\phi(x) = y$, we obtain a time complexity of $O(|\Aut(G)| + n^7)$. 
Our algorithm compares very favorably at $O(n^{2.8})$.
Furthermore, we can show that $\Aut(G)$ is exponential in $n$, where $n$ is the rank of $G$. 
\begin{prop}\label{lem:sizeautg}
    $|\Aut(G)|$ is exponential in $n$, where $n$ is the rank of $G$. 
\end{prop}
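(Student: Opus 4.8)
The plan is to produce an explicit family of at least $2^{n-1}$ distinct automorphisms of $G$. Write $G$ in its invariant factor decomposition $G \cong C_{m_1}\oplus\cdots\oplus C_{m_n}$ with $1 < m_1 \mid m_2 \mid \cdots \mid m_n$, so that $n$ is the rank of $G$, and fix a generator $e_i$ of the $i$-th summand, of order $m_i$. For each tuple $c = (c_2,\dots,c_n) \in \{0,1,\dots,m_1-1\}^{\,n-1}$ I would define $\phi_c \colon G \to G$ on the chosen generators by $\phi_c(e_1) = e_1$ and $\phi_c(e_j) = e_j + c_j e_1$ for $2 \le j \le n$.

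First I would verify that each $\phi_c$ is a well-defined automorphism. Well-definedness: the only relations among the $e_i$ are $m_i e_i = 0$, and $m_j(e_j + c_j e_1) = m_j e_j + c_j m_j e_1 = 0$ since $m_1 \mid m_j$ forces $m_j e_1 = 0$ in $C_{m_1}$; hence $\phi_c$ extends to a homomorphism. Reducing each $-c_j$ modulo $m_1$, the same computation shows $\phi_{-c}$ is a homomorphism, and $\phi_c \circ \phi_{-c} = \phi_{-c}\circ\phi_c$ is the identity on each $e_i$, hence on $G$, so $\phi_c \in \Aut(G)$. Distinctness: if $c \ne c'$, choose $j$ with $c_j \ne c_j'$ inside $\{0,\dots,m_1-1\}$; then $c_j e_1 \ne c_j' e_1$ because $e_1$ has order $m_1$, so $\phi_c(e_j) \ne \phi_{c'}(e_j)$ and $\phi_c \ne \phi_{c'}$. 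Therefore $|\Aut(G)| \ge m_1^{\,n-1} \ge 2^{\,n-1}$, which is exponential in $n$ (the case $n = 1$ being the trivial bound $|\Aut(G)| \ge 1$).

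I expect essentially no obstacle here; the one point needing care is the divisibility bookkeeping behind well-definedness, which is precisely where the invariant-factor ordering $m_1 \mid m_j$ enters — feeding a small generator into a larger one is allowed, whereas the reverse would break the map. If a sharper constant were desired, one could instead pick a prime $p$ for which the $p$-rank of $G$ equals $n$, note that $\Aut(G)$ contains the automorphisms fixing all Sylow summands except $H_p$, and use that the reduction $\Aut(H_p) \to \mathrm{GL}_n(\mathbb{F}_p)$ is surjective to get $|\Aut(G)| \ge |\mathrm{GL}_n(\mathbb{F}_p)|$; but the transvection count above already suffices for the statement as given.
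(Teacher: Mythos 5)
Your proof is correct, and it takes a genuinely different route from the paper. The paper first reduces to $p$-groups via the coprime-order splitting $\Aut(H\oplus K)\cong \Aut(H)\oplus\Aut(K)$, then invokes the Hillar--Rhea formula $|\Aut(C_{p^{m_i}}^{n_i})| = p^{(m_i-1)n_i^2}\prod_{j=0}^{n_i-1}(p^{n_i}-p^j)$ for each homocyclic layer and bounds the product below by $(p/2)^n$ (or $2^n$ when $p=2$). You instead work directly in the invariant factor decomposition and exhibit an explicit family of $m_1^{\,n-1}\ge 2^{\,n-1}$ transvection-type automorphisms $e_j\mapsto e_j+c_je_1$; your divisibility check ($m_1\mid m_j$ makes $m_je_1=0$, so adding the \emph{smallest} generator into the larger ones respects the relations) is exactly the point that needs care, and your inverse $\phi_{-c}$ and distinctness arguments are sound. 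Your approach is more elementary and self-contained: it needs no external counting formula and no Sylow reduction. What the paper's approach buys is a bound expressed in terms of the prime $p$ and, implicitly, access to the much stronger true growth rate $p^{\Theta(n^2)}$ for homocyclic components; what yours buys is transparency and an explicit subgroup of $\Aut(G)$ witnessing the exponential lower bound. Either suffices for the proposition as stated.
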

\begin{proof}
From Lemma \ref{lemmaprod}, we may assume that $G$ is a $p$-group.
Let $G \cong \bigoplus_{i=1}^k  C_{p^{m_i}}^{n_i}$ and let $n=n_1+n_2+\dots+n_k$ be the rank of $G$.
As a corollary to Theorem 4.1 in \cite{hillar}, we get $|\Aut(C_{p^{m_i}}^{n_i})| = {p^{(m_i-1){n_i}^2}}\prod\limits_{j=0}^{n_i-1}(p^{n_i}-p^j)  = p^{{m_i}{n_i}^2}\prod\limits_{j=0}^{n_i-1}\left(1-\dfrac{1}{p^{n_i-j}}\right)$. 
Since $p \geq 2$, this gives $|\Aut(C_{p^{m_i}}^{n_i})| \geq \dfrac{p^{{m_i}{n_i}^2}}{2^{n_i}}$.
For $p=2$, we have
$|\Aut(C_{p^{m_i}}^{n_i})| \geq \dfrac{p^{{m_i}{n_i}^2}}{2^{n_i}} = 2^{n_i(m_in_i-1)} > 2^{n_i}$.
For $p > 3$, $|\Aut(C_{p^{m_i}}^{n_i})| \geq \dfrac{p^{{m_i}{n_i}^2}}{2^{n_i}} > \left( \dfrac{p}{2}\right)^{n_i} $. 
Considering elements of $G$ to be $k$-tuples, we can construct an automorphism in $G$ by individual component-wise automorphisms of $C_{p^{m_i}}^{n_i} $.
This gives $|\Aut(G)| \geq \prod\limits_{i=1}^k  |\Aut(C_{p^{m_i}}^{n_i})|$.
For $p=2$, this implies that $|\Aut(G)| >  \prod\limits_{i=1}^k  2^{n_i} = 2^n$. For $p > 2$, this implies that 
$|\Aut(G)| \geq  \prod\limits_{i=1}^k  \left( \dfrac{p}{2}\right)^{n_i}  =  \left( \dfrac{p}{2}\right)^n$. 
In either case, we prove that $|\Aut(G)|$ is exponential in the rank of $G$.
\end{proof}
Since we have shown that $|\Aut(G)|$ is exponential in the rank $n$, the runtime of the naive algorithm, which has time complexity of  $O(|\Aut(G)| + n^7)$, is 
 exponential in $n$. This is significantly worse than our runtime of $O(n\log (n) \log a_n)$, especially for groups with small exponent. See Appendix A for an implementation of Algorithm \ref{psmith} and its numerical runtime analysis. 


\section{Computing Automorphic Orbits}
Consider a finite abelian group $G$ with invariant factor decomposition $C_{d_1} \oplus C_{d_2} \oplus \cdots \oplus C_{d_n}.$ We describe an algorithm that computes the automorphic orbits in $G$ with a time complexity of $O(\sqrt{|G|}2^n  n \log n)$. However, this algorithm works significantly faster for most groups $G$ and achieves this complexity only when the invariant factors of $G$ are relatively small (i.e. less than or equal to $100$). 

Previous works have computed the number of orbits in finite abelian groups (see \cite{SS,dutta}). Similar to \cite{rocca2011automorphismclasseselementsfinitely}, we compute the orbits (hence also the number of orbits) by reducing each element to a specific form and applying Algorithm Two to compute the quotient group, which can be used to determine which orbit each element is in. 

Define the orbits of $G$  the equivalence classes by $\sim$, where $x \sim y$ if and only if there exists $\phi \in \Aut(G)$ such that $\phi(x) = y$. Equivalently, we can define them as the orbits of the natural action of $\Aut(G)$ on $G$. 

By the same process described earlier (in the algorithm in Section 3.2), we can split $G$ into a direct sum of $p$-groups. The time complexity of prime factorization is at most $O(\sqrt{\text{exp}(G)})$, and since the latter steps require a complexity greater than $O(\sqrt{|G|})$, we may ignore the prime factorization step in the final complexity. 

We first describe the algorithm for finding automorphic orbits in $p$-groups and show how these orbits can subsequently be combined to find the orbits of $G$.

Let $H$ be a $p$-group of the form $
\bigoplus\limits_{i=1}^n C_{p^{e_i}}$. 
For each pair of elements $x, y$ in the orbit $\mathcal O$ of $H$, recall that $H /\langle x \rangle \cong H/\langle y \rangle$ by Theorem \ref{mainres}. Therefore, we can say a group $K$ 
\textit{corresponds} to orbit $\mathcal O$ if $H/\langle x\rangle \cong K$ for each $x\in \mathcal O$.

Consider an element $x \in H$ of the form $(a_1 p^{b_1}, a_2 p^{b_2}, \cdots,a_n  p^{b_n})$, where $0\leq b_i\leq e_i$ and $p \nmid a_i$ for each $1\leq i \leq n$. We can compute this representation in $O(n \log(\text{exp}(H)))$ by computing $\nu_p$ for each component of $H$. Again, this step is insignificant compared to the complexity of future steps and can be ignored. The element $x$ is in the same orbit as $(p^{b_1}, p^{b_2}, \cdots, p^{b_n})$ because there exists an automorphism between $x$ and $(p^{b_1}, p^{b_2}, \cdots, p^{b_n})$ component-wise for each component. We say $(p^{b_1}, p^{b_2}, \cdots, p^{b_n})$ is the \textit{reduced form} of $x$. Therefore, it is sufficient to compute the orbits of the elements that satisfy $a_1=a_2 = \cdots = a_n = 1$. 

There are $\prod_{i=1}^n (e_i+1)$ such elements. Therefore, for each element $x = (p^{b_1}, p^{b_2}, \cdots, p^{b_n})$, we can compute $H/\langle x\rangle$, and any element that has a reduced form equal to $x$ is in the orbit corresponding to $H/\langle x \rangle$. In fact, we can compute the number of elements that have this property. Since this is independent for each component, we show how to calculate it for the $i$th component and multiply this across. 

If $b_i=e_i$, then there is only one distinct $x$ as the component is just 0. If $b_i < e_i$, then we claim there are $p^{e_i-b_i} - p^{e_i-b_i-1}$. It is sufficient to compute how many $1\leq a_i\leq p^{e_i-b_i}$ exist such that $\gcd(a_i,p)=1$, but this is simply $\phi(p^{e_i-b_i})$,` where $\phi$ is the Euler phi function. 

Overall, this algorithm takes $O( \prod (e_i+1) \cdot n \log n)$ time, and it computes the size of each orbit and the reduced forms that are part of that orbit. 

Let $k$ be the number of distinct prime factors of the exponent of $G$. Performing this algorithm for all $k$ different $p$-groups gives $k$ sets of orbits $S_1, S_2, \dots, S_k$. The number of orbits is $|S_1| |S_2| \cdots |S_k|$, since we choose orbits $\mathcal{O}_1 \in S_1, \mathcal{O}_2 \in S_2, \cdots, \mathcal{O}_k\in S_k$ and each of these $k$-tuples of orbits corresponds to a unique orbit in $G$ due to Lemma \ref{lemmaprod}.

We can also find the properties of this unique orbit. The size of the orbit is the product of the sizes of the $k$ individual orbits. The representative elements of the orbit are any combination of $k$ representative elements, one from each $\mathcal{O}_i$. In other words, $x\in G$ is a representative element for this orbit if and only if the component of $x$ for the $i$th Sylow $p$-subgroup of $G$ is one of the representatives for $\mathcal{O}_i$.

Overall, the time complexity is \[O\left(\prod_{\substack{p \leq n\\ \text{$p$ prime}}} \prod_{i=1}^n (\nu_p(d_i) + 1)  \cdot n \log n\right).\] Notice that the double product simply computes the number of factors of $d_i$, so we can rewrite this as 
\[
O\left( \prod_{i=1}^n \tau(d_i) n \log n\right).
\]
However, $\tau(d_i) \leq 2\sqrt{d_i}$, so we can bound this complexity above with 
\[
O\left( \prod_{i=1}^n (2\sqrt{d_i}) n \log n\right) = O(\sqrt{|G|} 2^n \cdot n \log n).
\]
As stated earlier, when the $d_i$ are large, the $\sqrt{d_i}$ replacement is weak and the algorithm is much faster.

\appendix
\newpage
\section{Pseudocode for Algorithms and Numerical Analysis}
The following is pseudocode for Algorithm \ref{psmith}, where the isAutoImage($x$, $y$, $G$) function returns whether $x$ and $y$ are automorphic images in finite abelian group $G$ written in its invariant factor decomposition. 
\newline
\begin{breakablealgorithm}
\begin{algorithmic}[1]
\Function{DecomposeElementIntoParts}{$G$, $x$}
    \State $\textit{factors} \gets \textit{factors}(|G|)$
    \For{$p$ \textbf{in} \textit{factors}}
        \For{$i \gets 1$ \textbf{to} $|G|$}
            \If{$\nu_p(G[i]) \neq 0$}
                \State Add $i$ to \textit{psubindeces}
            \EndIf
        \EndFor
        \For{$i$ \textbf{in} \textit{psubindeces}}
            \State Add $\nu_p(x[i])$ to \textit{components}
        \EndFor
        \State Add \textit{components} to \textit{pParts}
    \EndFor
    \State \Return \textit{pParts}
\EndFunction
\Function{DecomposeAbelianGroup}{$G$}
    \State $\textit{factors} \gets \textit{factors}(|G|)$
    \For{$p$ \textbf{in} \textit{factors}}
        \For{$i \gets 1$ \textbf{to} $|G|$}
            \If{$\nu_p(G[i]) \neq 0$}
                \State Add $\nu_p(G[i])$ to \textit{components}
            \EndIf
        \EndFor
        \State Add \textit{components} to \textit{pParts}
    \EndFor
    \State \Return \textit{pParts}
\EndFunction
\Function{Sort}{$f$}
    \State Sort $f$ by the first element in the pair
\EndFunction
\Function{pGroupSNF}{$f$}
    \State \Call{Sort}{$f$}
    \For{$i \gets 1$ \textbf{to} \Call{Length}{$f$}}
        \State Add \textit{addto} to $f[i][1]$
        \State Add \Call{Max}{$0, f[i][2]-f[i][1]$} to \textit{addto}
    \EndFor
    \For{$i \gets 1$ \textbf{to} \Call{Length}{$f$}}
        \State Add \Call{Min}{$f[i][1], f[i][2]$} to \textit{final}
    \EndFor
    \State \Return \textit{final}
\EndFunction
\Function{combineLists}{\textit{list1}, \textit{list2}}
    \For{$i \gets 1$ \textit{to} \Call{length}{$list1$}}
        \State \textit{list}[i] $\gets$ [\textit{list1}[i], \textit{list2}[i]]
    \EndFor
    \State \Return \textit{list}
\EndFunction
\Function{isAutoImage}{$x$, $y$, $G$}
    \State \textit{decomp} $\gets$ \Call{DecomposeAbelianGroup}{$G$}
    \State \textit{partsx} $\gets$ \Call{DecomposeElementsIntoParts}{$x$}
    \State \textit{manipx} $\gets$ \Call{CombineLists}{\textit{partsx, decomp}}
    \For{$i \gets 1$ \textbf{to} \Call{Length}{\textit{manipx}}}
        \State \textit{manipx[i]} $\gets$ \Call{pGroupSNF}{\textit{manipx}[i]}
    \EndFor
    \State \textit{partsy} $\gets$ \Call{DecomposeElementsIntoParts}{$y$}
    \State \textit{manipy} $\gets$ \Call{CombineLists}{\textit{partsy, decomp}}
    \For{$i \gets 1$ \textbf{to} \Call{Length}{\textit{manipy}}}
        \State \textit{manipy}[i] $\gets$ \Call{pGroupSNF}{\textit{manipy}[i]}
    \EndFor
    \If{\textit{manipx} = \textit{manipy}}
        \State \Return true
    \EndIf
    \State \Return false
\EndFunction
\end{algorithmic}
\end{breakablealgorithm}

We ran this algorithm for groups of the form $C_4^n$ for $n \in \{3+10k \mid 0 \leq k \leq 16\} \cup \{2^k \mid 1 \leq k \leq 9\}$, $x 
= (1, 1, \dots)$, and $y = (3, 3, \dots)$ so that the exponent remains constant and small enough so that the prime factorization does not contribute significantly to the runtime. The data is shown in the table below. 
\begin{table}[h!]
    \centering
    \begin{tabular}{|c|*{14}{c|}}
        \hline
        \textbf{Rank} & 2 & 3 & 4 & 8 & 13 & 16 & 23 & 32 & 33 & 43 & 53 & 63 & 64 & 73 \\
        \hline
        \textbf{Runtime (ms)} & 1.6 & 2 & 1.8 & 3.6 & 4 & 4.6 & 8 & 9.2 & 9 & 17 & 20 & 26 & 26.6 & 35 \\
        \hline
    \end{tabular}
    \caption{Rank vs Runtime (ms)}
\end{table}

\begin{table}[h!]
    \centering
    \begin{tabular}{|c|*{12}{c|}}
        \hline
        \textbf{Rank} & 83 & 93 & 103 & 113 & 123 & 128 & 133 & 143 & 153 & 163 & 256 & 512 \\
        \hline
        \textbf{Runtime (ms)} & 44 & 61 & 72 & 88 & 106 & 110.6 & 122 & 145 & 173 & 199 & 545.8 & 3263 \\
        \hline
    \end{tabular}
    \caption{Rank and Runtime (ms) continued}
\end{table}
Implementing in GAP and using Python polynomial fitting code, the best fit polynomial is $0.2129476474670508x^{1.28247480729629}$, which is near-linear and consistent with the runtime of $O(n\log n)$.

\section*{Acknowledgements}
The first three authors would like to express their heartfelt gratitude to their mentor Jared Kettinger, the fourth author, for his support and insight during the entire research process. All four authors would like to thank their mentor, Professor Coykendall, for his knowledge and expertise. The authors also kindly thank Dr. Felix Gotti and the PRIMES-USA research program for giving them this amazing opportunity to learn and conduct research. 


\bibliographystyle{unsrt}
\bibliography{References}











\end{document}